\documentclass[10pt]{article}
\usepackage{amsmath,amsfonts,latexsym, amssymb, fullpage}
\usepackage{mathrsfs}
\usepackage{nicefrac}
\usepackage{graphicx}
\usepackage{amssymb, amsthm, tikz, xcolor}

\newtheorem{theorem}{Theorem}
\newtheorem{definition}[theorem]{Definition}

\newtheorem{lemma}[theorem]{Lemma} 

\definecolor{light-gray}{gray}{0.6}
\definecolor{dark-gray}{gray}{0.3}

 \author{Jeffrey J. Beyerl
\thanks{Department of Mathematics, University of Central Arkansas, Conway, AR 7203
 \mbox{ email: \textsf{jbeyerl@uca.edu}}}
 \and 
 Wayne Wallace
\thanks{Department of Mathematics, University of Central Arkansas, Conway, AR 7203
 \mbox{ email: \textsf{twallace6@cub.uca.edu}}}
 }

\title{Improper Interval Graphs and the Corresponding Minimal Forbidden Interval Subgraphs}
\date{ }

\pagenumbering{gobble}
\begin{document}

\maketitle
\begin{abstract}
An interval graph is considered \emph{improper} if and only if it has a representation such that an interval contains another interval. Previously \cite{Beyerl} these have been investigated in terms of balance and minimal forbidden interval subgraphs for the class of 1-improper interval graphs. This paper investigates the minimal forbidden interval subgraphs further, generalizing results to all $p$-improper interval graphs. It is apparent that there are many different types of possible minimal forbidden subgraphs that fall into four broad categories. 
\end{abstract}

\section{Introduction}

An $interval$ $graph$ is a finite, simple graph, $G = (V, E)$ if and only if there is a representation $\alpha : v \longrightarrow I_v$ of vertices in $G$ to intervals, $I_v$, on the real line such that $vw \in E \Leftrightarrow I_v \cap I_w \neq \emptyset$. Interval graphs were first discussed by Hajos \cite{Hajos} in 1957 and expanded on by Lekkerkerker and Boland \cite{Lekkerkerker} in 1962 and Gilmore and Hoffman \cite{Gilmore} in 1964. There have been many expositions and reports on interval graphs since then. Most prominently, Roberts \cite{Roberts} investigated proper interval graphs and provided the complete classification that they are precisely the interval graphs that do not contain the subgraph, $K_{1,3}$. This characterization was later generalized by Proskurowski and Telle \cite{Proskurowski} to $q$-proper interval graphs. A $q$-proper interval graph is an interval graph where no interval is properly contained by more than $q$ others. Beyerl and Jamison \cite{Beyerl} would later explore containment restrictions in the opposite direction, with $p$-improper interval graphs. A $p$-improper interval graph is an interval graph where no interval contains more than $p$ other intervals.
 
A minimal forbidden interval subgraph for $p$-improper interval graphs is an interval graph that is at least $p+1$-improper but not $p$-improper such that all proper subgraphs are $p$-improper. The impropriety imp($G$) of $G$ is defined as the smallest $p$ such that $G$ has a $p$-improper representation. For any particular vertex $z$, the impropriety of the vertex, denoted imp$_\alpha(z)$, is the number of vertices properly contained in $z$. The maximum imp$_\alpha(z)$ is called the impropriety of $G$, imp ($G$). The local components of $G$ are the components of $G\backslash \lbrace z \rbrace$, where $z$ is the vertex of maximum impropriety in $G$. The support of a local component of $G$ is the union of all intervals in the local component. The weight wt($z$) of some vertex $z$ in $G$ is the sum of the $n$-2 smallest orders of the non-exterior local components. If the impropriety of $G$ is equal to the weight of $G$, the graph is said to be balanced. 

Previously the notion of a balanced improper interval graph was introduced \cite{Beyerl}. While somewhat technical, the idea of a balanced improper interval graph is that everything that contributes to the impropriety of the graph has a necessarily simple structure whenever an interval representation is constructed. In our current presentation we will utilize the same idea but relax it a little: instead of a necessary quantifier we use an existential quantifier. In particular we will define the notion of confined and unconfined side components where unconfined side components will be those in which there is a representation that has a simple structure. Using these notions we then give a characterization of the minimal forbidden interval subgraphs for the class of $p$-improper interval graphs.

\section{Side Components}

First we address the idea of a \textit{side component} and a \textit{potential side component}. Formally defined below, a side component can be thought of as the local components that are on the left or right of a representation. Potential side components are the local components that are side components in some representation. In \cite{Beyerl} the notion of a \textit{basepoint} was defined for balanced graphs. We now extend this notion to include all $p$-improper interval graphs by labeling a vertex whose support necessarily contains the support of $p$ other vertices. This definition for a basepoint is independent of representation.

\begin{definition}
Let $\alpha$ be a representation of some interval graph $G$. Let $H_0, ...,H_m$ be the local components of a basepoint of $G$ with respective support $I_0, I_1, ..., I_m$ ordered from left to right. Call $H_0$ and $H_m$ the side components.
\end{definition}

There are four main types of side components we will be concerned with in this paper determined by two binary properties:  exterior and non-exterior, as well as unconfined and confined side components. 

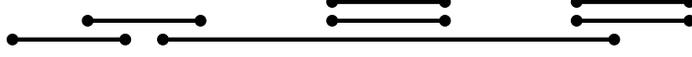
\begin{figure}
\begin{center}
\begin{tikzpicture}
\draw [ultra thick, cap=round] (-3,0) -- (3,0);
\draw [ultra thick] (-3,0) circle [radius=0.05];;
\draw [ultra thick] (3,0) circle [radius=0.05];;

\draw [ultra thick, cap=round] (-4, 0.25) -- (-2.5, 0.25);
\draw [ultra thick] (-4,0.25) circle [radius=0.05];;
\draw [ultra thick] (-2.5,0.25) circle [radius=0.05];;

\draw [ultra thick, cap=round] (2.5, 0.25) -- (4, 0.25);
\draw [ultra thick] (2.5,0.25) circle [radius=0.05];;
\draw [ultra thick] (4,0.25) circle [radius=0.05];;

\draw [ultra thick, cap=round] (-5,0) -- (-3.5,0);
\draw [ultra thick] (-5,0) circle [radius=0.05];;
\draw [ultra thick] (-3.5,0) circle [radius=0.05];;

\draw [ultra thick, cap=round] (2.5,.5) -- (4,.5);
\draw [ultra thick] (2.5,.5) circle [radius=0.05];;
\draw [ultra thick] (4,.5) circle [radius=0.05];;

\draw [ultra thick, cap=round] (-0.75,.25) -- (0.75,.25);
\draw [ultra thick] (-0.75,.25) circle [radius=0.05];;
\draw [ultra thick] (0.75,.25) circle [radius=0.05];;

\draw [ultra thick, cap=round] (-0.75,0.5) -- (0.75,0.5);
\draw [ultra thick] (-0.75,0.5) circle [radius=0.05];;
\draw [ultra thick] (0.75,0.5) circle [radius=0.05];;
\end{tikzpicture}
\caption{An interval representation of a 2-improper interval graph. The left most side component is an unconfined, exterior side component. The right most side component is an unconfined, non-exterior side component.}
\end{center}
\end{figure}

\begin{definition}
Let $G$ be a $p$-improper interval graph and $H$ be a side component of $G$. Let $z$ be the basepoint of $G$. $H$ is called an exterior side component if it contains at least one vertex of distance at least 2 from $z$. Similarly, a side component that contains no vertices of distance 2 from the basepoint is considered non-exterior. 
\end{definition}

Note that from the above we see that given a representation there are only two ways to construct an exterior local component: either the component's support is on the left side of the basepoint, or on the right side. It cannot be contained within the support of the basepoint.

Confined and unconfined side components are defined as follows.

\begin{definition}
Let $G$ be a $p$-improper interval graph and $H$ be a potential side component of $G$. If there is a representation in which $H$ contains no vertex that contributes to the impropriety of $G$, then $H$ is called an unconfined side component. On the other hand, a potential side component that necessarily does contain one or more vertices that contribute to the impropriety of $G$ is considered confined.
\end{definition}

Now we can classify side components into four sets; side components that are unconfined and exterior, side components that are unconfined and non-exterior, side components that are confined and exterior, and side components that are confined and non-exterior. We can investigate each of these types of side components in turn. 

Let us begin by noting a necessary technical lemma. 

\begin{lemma}\label{NoExtra}
Let $G$ be a minimal forbidden subgraph for the class of $p$-improper interval graphs. Then let $F$ be the graph formed by the addition of any additional vertices to $G$. Then $F$ is not minimal. 
\end{lemma}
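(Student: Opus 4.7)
The plan is to exploit the fact that $G$ sits inside $F$ as a proper induced subgraph, together with the monotonicity of ``not $p$-improper'' under passage to supergraphs. Once these are in place, the minimality clause in the definition of a minimal forbidden interval subgraph fails for $F$ immediately, since $G$ itself then serves as a proper subgraph of $F$ which is not $p$-improper.

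First I would dispose of the trivial case in which $F$ is not an interval graph at all: such an $F$ fails even the first requirement of being a minimal forbidden \emph{interval} subgraph, and we are done. So assume henceforth that $F$ is an interval graph, and let $\alpha$ be any interval representation of $F$. Restricting $\alpha$ to the vertex set of $G$ produces a valid interval representation $\alpha'$ of $G$, because $G$ is induced in $F$. In $\alpha'$, the number of intervals properly contained in any given interval is at most the corresponding count in $\alpha$, since $\alpha'$ is obtained by simply discarding some intervals. By hypothesis $G$ is not $p$-improper, so $\alpha'$ must possess some interval containing more than $p$ others; hence the same is true of $\alpha$, and therefore $F$ is also not $p$-improper.

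The minimality requirement for $F$ now fails outright. Since at least one new vertex was added, $G$ is a proper induced subgraph of $F$, and $G$ is not $p$-improper by hypothesis. Thus not every proper subgraph of $F$ is $p$-improper, which contradicts the minimality condition. Consequently $F$ is not a minimal forbidden interval subgraph, regardless of whether $F$ itself happens to be forbidden.

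I do not anticipate any real obstacle here: the entire content of the proof is the induced-subgraph monotonicity of impropriety, which is essentially immediate from the definition (dropping vertices can only reduce the number of intervals properly contained within a given one), combined with a little bookkeeping to handle the side case that $F$ might not even be an interval graph.
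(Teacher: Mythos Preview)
Your proposal is correct and follows essentially the same approach as the paper: $G$ is a proper (induced) subgraph of $F$ that is itself forbidden, so the minimality clause fails for $F$. Your treatment is more detailed than the paper's two-line proof---in particular, your argument that $F$ is not $p$-improper and your handling of the case that $F$ is not an interval graph are extra care the paper omits, and indeed (as you yourself note at the end) the forbiddenness of $F$ is not actually needed for the conclusion.
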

\begin{proof}
$F$ contains $G$ as a subgraph. $G$ is minimal for the class of $p$-improper interval graphs. By the definition of minimal, $F$ cannot be minimal, as desired. 
\end{proof}

\section{Main Theorems}

Now let first examine unconfined, exterior side components. We want to show the following claim:

\begin{theorem}\label{UnconfinedExterior}
Let $G$ be a minimal forbidden interval subgraph for the class of $p$-improper interval graphs. Let $H$ be an unconfined, exterior side component of $G$. Then $H$ consists of precisely an exterior vertex and a vertex connecting it to the basepoint of $G$.
\end{theorem}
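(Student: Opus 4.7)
The plan is a contradiction argument combining the minimality of $G$ with the geometric freedom afforded by the unconfined representation of $H$.

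First I would verify that $H$ must contain at least the two claimed vertices. Since $H$ is exterior, by definition some $v \in H$ has graph-distance at least $2$ from the basepoint $z$. Because $H$ is a connected local component of $G \setminus \{z\}$, a shortest path from $v$ to $z$ in $G$ enters $z$ through some $w \in H$ adjacent to $z$; this $w$ is the connector required by the statement.

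Next, I would assume for contradiction that $H$ contains an additional vertex $u \notin \{v, w\}$. Fix a representation $\alpha$ realizing $H$ as unconfined, so that no interval of $H$ is properly contained in $z$ in $\alpha$ and the $\geq p+1$ intervals witnessing imp$(z) \geq p+1$ lie entirely outside the support of $H$. By the definition of a minimal forbidden subgraph, $G \setminus \{u\}$ is $p$-improper and so admits a representation $\beta$ with impropriety at most $p$. The crux is then to extend $\beta$ to a representation of $G$ of impropriety at most $p$ by inserting $u$'s interval near the existing intervals of its neighbors in $\beta$. Because $u$'s $G$-neighbors all lie in $H \cup \{z\}$, and because in $\alpha$ the vertex $u$ was placed outside the interior of $z$, a sufficiently small interval for $u$ can be slotted in without producing any new proper containment. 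This would yield imp$(G) \leq p$, contradicting the assumption that $G$ is forbidden and forcing $H = \{v, w\}$.

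The main obstacle is the insertion step, because the representation $\beta$ need not resemble the unconfined $\alpha$: the intervals of $H \setminus \{u\}$ in $\beta$ could sit almost anywhere, and $w$ in particular must stretch between $z$ and $v$. I would treat separately the subcases where $u$ is adjacent to $z$ (so the inserted interval must skim $z$'s boundary without being swallowed by it) and where it is not (so $u$ can be pushed entirely beyond $z$'s support). Both subcases rely on unconfinedness to supply the room needed for the insertion, together with the fact that $u$'s only neighbors sit in $H \cup \{z\}$, so that the new interval cannot create containments involving vertices elsewhere in $G$.
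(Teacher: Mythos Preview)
Your approach diverges from the paper's, and the step you yourself flag as the main obstacle is a genuine gap. The paper never attempts to modify a $p$-improper representation $\beta$ of $G\setminus\{u\}$. Instead it splits any hypothetical extra vertex of $H$ into two cases relative to the basepoint: if the vertex is contained in the basepoint it contributes to the impropriety, contradicting the definition of unconfined; if it is not contained in the basepoint then its presence is superfluous for forcing the impropriety and hence violates minimality (this is where Lemma~\ref{NoExtra} is invoked). Either way the extra vertex cannot exist, and no insertion into a foreign representation is ever required.

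By contrast, your insertion step is not justified as written. Unconfinedness is a property of one particular representation $\alpha$ of $G$; once you pass to an arbitrary representation $\beta$ of $G\setminus\{u\}$ with impropriety at most $p$, you have no guarantee that the intervals of $u$'s neighbours in $\beta$ are positioned so that an interval for $u$ can be threaded in without being properly contained in an interval that already holds $p$ others, and without overlapping some non-neighbour of $u$ lying near the endpoint of $z$. The appeal to ``room supplied by unconfinedness'' does not help here, since $\beta$ need not inherit any of the geometry of $\alpha$. It may be possible to carry out the insertion with a careful case analysis, but that analysis would be the entire content of the proof and you have not supplied it; the paper's route sidesteps the issue entirely.
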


\begin{proof}
$H$ must have an exterior vertex because of the definition of exterior side component. Now recall that an exterior vertex is a vertex that is of distance at least two from the basepoint. Since the exterior vertex is of distance two from the basepoint, there must be at least one vertex between our exterior vertex and the basepoint; however, one connecting vertex is sufficient. By Lemma \ref{NoExtra}, there can be no additional vertices in $H$ that are not contained in the basepoint. Because $H$ is unconfined there can be no additional verticies in $H$ that are contained in the basepoint. We conclude that there is exactly one vertex connecting the exterior vertex and the basepoint. 
\end{proof}

From here, we can move to unconfined, non-exterior side components, but first, we must introduce some additional terminology. 

\begin{definition}
Let $G$ be an interval graph, with a representation, $\alpha$, and let $B$ be a set of vertices of $G$ who are adjacent to the basepoint of $G$, but not properly contained by the basepoint. Also assume that the vertices in $B$ are not adjacent to any exterior vertices of $G$. Call $B$ a blocking set of $G$. Note that a blocking set of $G$ need not be an entire local component of $G$.
\end{definition}

Note that a blocking set depends on the representation. As with potential side components, we define a potential blocking set as a set of vertices of $G$ that can be made into a blocking set of $G$ by changing the representation of $G$. Given a representation, we also define interior potential blocking sets as those potential blocking sets that are contained in potential side components that are not actual side components. This will be used to identify those blocking sets that can definitely be switched with an actual blocking set.

\begin{theorem}\label{UnconfinedNonExterior}
Let $G$ be a minimal forbidden interval subgraph for the class of $p$-improper interval graphs and $\alpha$ be a minimal representation of $G$. Let $H$ be an unconfined, non-exterior side component of $G$. Then $H$ consists of a clique that is of maximal order among all interior potential blocking sets of $G$.  
\end{theorem}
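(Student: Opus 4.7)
My plan is to address the two halves of the conclusion in sequence: first that $H$ is a clique, then that $|H|$ is maximal among interior potential blocking sets.

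For the clique half, I would start from the unconfined hypothesis to fix a representation $\alpha'$ of $G$ in which no vertex of $H$ is properly contained in the basepoint $z$ and $H$ remains a side component. The non-exterior hypothesis forces every vertex of $H$ to lie at distance exactly one from $z$, so each interval $I_v$ with $v\in H$ intersects $I_z$ but is not properly contained in it; hence $I_v$ extends past $I_z$ on at least one side. Because $H$ is a side component, the supports of all other local components lie on the opposite side of $z$, and these supports are pairwise disjoint. Any extension of some $I_v$ to the ``wrong'' side would force $H$'s support to overlap with a neighboring component's support, a contradiction. Thus every interval in $H$ extends past $I_z$ on the same side and shares the corresponding endpoint of $I_z$ as a common point. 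These intervals pairwise intersect, so $H$ is a clique.

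For the maximality half, I would argue by contradiction using that $\alpha$ is a minimal representation and $G$ is a minimal forbidden subgraph. Suppose there is an interior potential blocking set $B$ with $|B|>|H|$; by definition $B$ lies inside a non-side potential side component $C$ of $\alpha$. I would construct a new representation $\alpha''$ that swaps the roles of $H$ and $C$: place $C$ as a side component at the end currently occupied by $H$ with $B$ serving as its blocking portion extending past $I_z$, and relocate $H$ (now a clique, by the first half) into the interior so that its vertices can be packed inside $I_z$ as a compact cluster. In $\alpha''$, the vertices of $B$ cease to be properly contained in $z$, while the $|H|$ vertices of $H$ become properly contained; the net change in the impropriety of $z$ is $|H|-|B|<0$. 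The resulting impropriety is at most $p$, contradicting the hypothesis that $G$ is a minimal forbidden subgraph of the class of $p$-improper interval graphs.

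The principal difficulty is carrying out the swap in the second half rigorously. I would need to verify that $\alpha''$ is a valid interval representation of $G$---no edges are added or removed by the relocation---and that no vertex other than $z$ gains enough impropriety to push the overall impropriety back above $p$. The clique structure of $H$ established in the first half is essential: only a clique can be compressed into a tight cluster inside $I_z$ without introducing spurious adjacencies or destroying existing ones among its members. The definition of a blocking set (no edges to exterior vertices) ensures that exterior components on the far side of $z$ are untouched by the rearrangement, and the ``interior potential blocking set'' hypothesis on $B$ is precisely what licenses the geometric move of $C$ to the end. Lemma~\ref{NoExtra} remains available to rule out any extraneous vertices in $H$, simplifying the comparison between $H$ and $B$.
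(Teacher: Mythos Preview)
Your approach matches the paper's: establish that $H$ is a clique from the unconfined and non-exterior hypotheses, then run a swap argument against a hypothetically larger interior potential blocking set. Your geometric justification for the clique step is more explicit than the paper's, which simply asserts that unconfined plus non-exterior forces $H$ to be a blocking set and hence a clique.

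One small point to tighten: in the maximality half you conclude that the swapped representation has impropriety at most $p$, contradicting that $G$ is forbidden. That conclusion is not immediate from the net change $|H|-|B|<0$ alone; all you directly get is that $\alpha''$ has strictly smaller impropriety at $z$ than $\alpha$ does. The cleaner contradiction---and the one the paper uses---is with the minimality of the representation $\alpha$, which you already listed among your hypotheses. Route the contradiction there and the argument closes without needing to control the exact value of $\operatorname{imp}(G)$.
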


\begin{proof}
Since $H$ is unconfined, it must have no vertices that contribute to the impropriety of $G$, and since $H$ is non-exterior, all vertices must be adjacent to the basepoint of $G$. Thus, $H$ is a blocking set of $G$ as well as a clique. 

Now assume there was an interior potential blocking set of $G$ with order greater than $|H|$. This larger blocking set can be switched with $H$ which would lower the impropriety of $G$, giving rise to the contradiction that $\alpha$ is not minimal. Thus, $H$ must be of maximal order among the interior potential blocking sets. 


Hence, all unconfined, non-exterior side components in minimal forbidden interval subgraphs for $p$-improper interval graphs must be of the form described above. 
\end{proof}

Figure 1 gives two examples of unconfined side components, both exterior and non-exterior. 

Now we can move on to the cases of confined side components, which are more complicated than their unconfined counterparts. Examples of these can be seen in Figure 2. We'll begin with the confined, non-exterior side component. 

\begin{theorem}
Let $G$ be a minimal forbidden interval subgraph for the class of $p$-improper interval graphs and $\alpha$ be a minimal representation of $G$. Let $H$ be a confined, non-exterior side component of $G$. Then $H$ consists of a blocking set, $B$ of maximum order among the interior potential blocking sets, and some set of vertices that contribute to the impropriety of $G$. Furthermore, if the blocking set has $n$ vertices, some number of these vertices, ranging from 1 to $n-1$ must be adjacent to at least one of the vertices in the contributory set of vertices. 
\end{theorem}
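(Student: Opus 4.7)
The plan is to verify three claims in order, mirroring the outline of Theorem~\ref{UnconfinedNonExterior}: first that $H$ decomposes as a blocking set $B$ together with a contributory set $C$; second that $B$ has maximum order among interior potential blocking sets; and third that the number of $B$-to-$C$ adjacencies lies strictly between $0$ and $|B|$.

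First I would extract $B$ and $C$. Because $H$ is confined, the set $C$ of vertices in $H$ contributing to the impropriety is nonempty by definition. Because $H$ is non-exterior, every vertex of $H$ lies within distance one of the basepoint $z$, so any vertex of $H$ not properly contained by $z$ must be adjacent to $z$. Let $B$ be that set. Since $H$ is a connected component of $G\setminus\{z\}$ and exterior vertices of $G$ lie in other components, vertices in $B$ are not adjacent to any exterior vertex, so $B$ is a genuine blocking set under $\alpha$. Together $B\cup C$ accounts for all of $H$.

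For maximality of $B$, I would reuse the swap argument from the proof of Theorem~\ref{UnconfinedNonExterior}: if some interior potential blocking set had strictly more than $|B|$ vertices, I would substitute it for $H$ in the representation, producing a strictly lower impropriety and contradicting the minimality of $\alpha$. Lemma~\ref{NoExtra} is what rules out any additional superfluous vertices in $H$ beyond $B\cup C$, since their removal would still leave a non-$p$-improper subgraph.

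The hard part is the adjacency count. The lower bound of $1$ is immediate: $H$ is a single local component, so if no edge joined $B$ to $C$ the two sets would fall into different components of $G\setminus\{z\}$. The upper bound of $n-1$ is more subtle and is where I expect the real work. I would argue by contradiction, assuming every vertex of $B$ is adjacent to some vertex of $C$, and then redraw $\alpha$ by sliding a chosen contributory vertex $c\in C$ outward so that it takes over the outer blocking role currently played by part of $B$. Such a swap is licensed by $c$'s adjacencies to all of $B$, and it removes $c$ from being properly contained by $z$, reducing the impropriety at $z$ in the new representation. The care required is to confirm that this rearrangement preserves the remaining intersections of $\alpha$ and does not create a new impropriety elsewhere in $H$; once that is established, the minimality of $\alpha$ is contradicted and the upper bound follows.
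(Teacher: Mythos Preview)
Your outline matches the paper's proof: extract $C$ (nonempty since $H$ is confined) and $B$, reuse the Theorem~\ref{UnconfinedNonExterior} swap for maximality of $B$, use connectivity of $H$ for the lower adjacency bound, and slide the rightmost contributory interval outward past the basepoint's endpoint for the upper bound. The only step the paper includes that you skip is arguing $B\neq\emptyset$ --- the paper observes that if $H$ had no blocking vertices the rightmost contributory interval could already be pushed past the basepoint's endpoint, lowering the impropriety and contradicting minimality of $\alpha$ --- but this is just your upper-bound move in the degenerate case $n=0$, so it folds in for free.
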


\begin{proof}
Let $H$ be as described and without loss of generality, assume $H$ is the rightmost side component of $G$. Since $H$ is confined, it must contain some set of vertices that contribute to the impropriety of $G$. There must also be a blocking set of vertices. To see this, let $a$ be the rightmost endpoint of the rightmost vertex in $H$ that contributes to the impropriety of $G$, and let $b$ be the rightmost endpoint of the basepoint. If there is no blocking set in $H$, then there exists another representation of $G$ with no changes other than $a>b$. This new representation has a lower impropriety, a contradiction. 

Using the same reasoning as used in Theorem \ref{UnconfinedNonExterior}, this blocking set must be of maximum order among interior potential blocking sets of $G$. 

Now we will show that 1 to $n-1$ vertices in the blocking set must be adjacent to at least one vertex in the contributing set. Assume that no vertices in the blocking set are adjacent to a vertex in the contributory set. Then the blocking set and the contributory set are separate local components of $G$, and so we would not be in the confined case. Now assume that all $n$ vertices in the blocking set are adjacent to at least one vertex in the contributory set. Again, let $a$ be the rightmost endpoint of this rightmost vertex in the contributory set and $b$ be the rightmost endpoint of the basepoint. There exists another representation of $G$ where $a>b$, which means that $\alpha$ is not minimal, which is a contradiction. Thus, the number of vertices in the blocking set adjacent to a vertex in the contributory set must range from 1 to $n-1$. 

By lemma \ref{NoExtra}, no further vertices can be added to $H$, so all unbalanced, non-exterior side components in minimal forbidden interval subgraphs for $p$-improper interval graphs must be of the form described above. 
\end{proof}

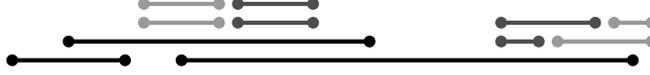
\begin{figure}
\begin{center}
\begin{tikzpicture}
\draw [ultra thick, cap=round] (-3,0) -- (3,0);
\draw [ultra thick] (-3,0) circle [radius=0.05];;
\draw [ultra thick] (3,0) circle [radius=0.05];;
\draw [ultra thick, cap=round] (-5.25,0) -- (-3.75,0);
\draw [ultra thick] (-5.25,0) circle [radius=0.05];;
\draw [ultra thick] (-3.75,0) circle [radius=0.05];;
\draw [ultra thick, cap=round] (-4.5,0.25) -- (-0.5,0.25);
\draw [ultra thick] (-4.5,0.25) circle [radius=0.05];;
\draw [ultra thick] (-0.5,0.25) circle [radius=0.05];;
\draw [light-gray, ultra thick, cap=round] (-3.5,0.5) -- (-2.5,0.5);
\draw [light-gray, ultra thick] (-3.5,0.5) circle [radius=0.05];;
\draw [light-gray, ultra thick] (-2.5,0.5) circle [radius=0.05];;

\draw [light-gray, ultra thick, cap=round] (-3.5,0.75) -- (-2.5,0.75);
\draw [light-gray, ultra thick] (-3.5,0.75) circle [radius=0.05];;
\draw [light-gray, ultra thick] (-2.5,0.75) circle [radius=0.05];;
\draw [dark-gray, ultra thick, cap=round] (-2.25,0.75) -- (-1.25,0.75);
\draw [dark-gray, ultra thick] (-2.25,0.75) circle [radius=0.05];;
\draw [dark-gray, ultra thick] (-1.25,0.75) circle [radius=0.05];;

\draw [dark-gray, ultra thick, cap=round] (-2.25,0.5) -- (-1.25,0.5);
\draw [dark-gray, ultra thick] (-2.25,0.5) circle [radius=0.05];;
\draw [dark-gray, ultra thick] (-1.25,0.5) circle [radius=0.05];;
\draw [light-gray, ultra thick, cap=round] (2.0,0.25) -- (3.25,0.25);
\draw [light-gray, ultra thick] (2.0,0.25) circle [radius=0.05];;
\draw [light-gray, ultra thick] (3.25,0.25) circle [radius=0.05];;

\draw [light-gray, ultra thick, cap=round] (2.75,0.5) -- (3.25,0.5);
\draw [light-gray, ultra thick] (2.75,0.5) circle [radius=0.05];;
\draw [light-gray, ultra thick] (3.25,0.5) circle [radius=0.05];;
\draw [dark-gray, ultra thick, cap=round] (1.25,0.25) -- (1.75,0.25);
\draw [dark-gray, ultra thick] (1.25,0.25) circle [radius=0.05];;
\draw [dark-gray, ultra thick] (1.75,0.25) circle [radius=0.05];;

\draw [dark-gray, ultra thick, cap=round] (1.25,0.5) -- (2.5,0.5);
\draw [dark-gray, ultra thick] (1.25,0.5) circle [radius=0.05];;
\draw [dark-gray, ultra thick] (2.5,0.5) circle [radius=0.05];;
\end{tikzpicture}

\caption{An interval representation of a 4-improper interval graph, $G$. The left side component is an exterior, confined side component, and the right is confined, non-exterior. The dark gray intervals mark contributory sets and the light gray intervals are blocking sets.}
\end{center}
\end{figure}

The last case to consider is the case of confined exterior side components. 

\begin{theorem}
Let $G$ be a minimal forbidden interval subgraph for the class of $p$-improper interval graphs and $\alpha$ be a minimal representation of $G$. Let $H$ be a confined, exterior side component of $G$. Then $H$ consists of the following: 
\begin{itemize}
	\item Exactly one exterior vertex
	\item Exactly one vertex connecting the exterior vertex to the basepoint
	\item Some set of vertices that contribute to the impropriety of $G$, at least one of which is adjacent to the connecting vertex. 
	\item Some clique of vertices adjacent to the basepoint of $G$ but not properly contained by the basepoint, excluding the connecting vertex. This set of vertices is the blocking set of $H$ and as long as there are two side components, this blocking set is of maximum order among the potential blocking sets contained in $H$.
\end{itemize}	
Furthermore, if there are $n$ vertices in the blocking set of $H$, then no more than $n-1$ of these vertices can be adjacent to at least one vertex in the contributory set of vertices of $H$. All vertices in the blocking set of $H$ must be adjacent to the basepoint of $G$, and no vertex of the blocking set of $H$ can be adjacent to the exterior vertex. 
\end{theorem}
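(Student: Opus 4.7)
The plan is to dissect $H$ into the four claimed pieces and argue that each is forced, closely mirroring the proofs of Theorem \ref{UnconfinedExterior} and the confined non-exterior theorem. The proof splits naturally into four stages plus a verification of the additional adjacency constraints.

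First, I would handle the exterior vertex and the connecting vertex exactly as in Theorem \ref{UnconfinedExterior}. Since $H$ is exterior, its definition forces at least one vertex of distance $\geq 2$ from the basepoint, and a single intermediate vertex suffices to realize that distance. Lemma \ref{NoExtra} then rules out any additional exterior or connecting vertices, since a minimal forbidden subgraph cannot tolerate extraneous vertices.

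Second, for the contributory set, I would use that $H$ is confined: by the definition of confinement, $H$ must contain at least one vertex contributing to the impropriety of $G$. To see that at least one such vertex is adjacent to the connecting vertex, I would argue by contradiction: if no contributory vertex touches the connecting vertex, then in a representation the contributory portion could be pulled away from the basepoint's support and the confinement would fail, or the contributory portion would split off as its own local component, contradicting that $H$ is a single side component.

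Third, for the blocking set, I would first show existence by the sliding argument from the confined non-exterior theorem: let $a$ be the right endpoint of the rightmost contributory vertex and $b$ be the right endpoint of the basepoint; if no blocking set exists in $H$, one may shift so that $a>b$, which strictly decreases impropriety, contradicting minimality of $\alpha$. For maximality, I would invoke the same swap argument as in Theorem \ref{UnconfinedNonExterior}: any strictly larger interior potential blocking set could replace $H$'s blocking set and lower impropriety. The parenthetical restriction to the case of two side components is exactly where this swap argument presupposes a second side to swap against, so the single-side-component case must be excluded here.

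Fourth, I would verify the adjacency constraints. That every blocking-set vertex is adjacent to the basepoint and not to the exterior vertex is immediate from the definition of a blocking set. The bound of at most $n-1$ blocking-set vertices adjacent to contributory vertices is again the sliding argument from the confined non-exterior case: if all $n$ were so adjacent, the rightmost contributory vertex could be slid past $b$, reducing impropriety. Finally, Lemma \ref{NoExtra} closes off the possibility of any additional vertices in $H$, completing the characterization. I expect the main obstacle to be making the maximality-of-blocking-set swap rigorous while simultaneously preserving the forced exterior/connecting skeleton on $H$'s side; the interplay between the confinement requirement (which anchors the contributory vertices) and the swap (which rearranges blocking sets) is the most delicate point, and it is precisely here that the "two side components" hypothesis becomes essential.
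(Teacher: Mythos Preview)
Your overall architecture matches the paper's, but there is one genuine misreading and one missing step.

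\textbf{Misreading of the maximality claim.} The theorem asserts that the blocking set is of maximum order among the potential blocking sets \emph{contained in $H$}, not among the interior potential blocking sets elsewhere in $G$. Accordingly, the paper's swap argument is internal to $H$: it supposes a larger potential blocking set inside $H$ and switches it with the current one, lowering impropriety. Your version swaps with a blocking set on the \emph{other} side, and you then read the ``two side components'' hypothesis as providing a second side to swap against. That is not how the paper uses the hypothesis, and your argument as written proves a different (and in this setting not the stated) maximality property.

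\textbf{Adjacency to the basepoint.} You dismiss ``every blocking-set vertex is adjacent to the basepoint'' as immediate from the definition. The paper instead gives a substantive minimality argument: assuming some blocking vertex $w$ lies at distance two from the basepoint, it deletes the original exterior vertex, the connecting vertex, and all blocking vertices except $w$ and the vertex $q$ not adjacent to the contributory set; the remainder is still forbidden (with $q,w$ forming an unconfined exterior side component), contradicting minimality of $G$. Whether this is redundant with the formal definition depends on whether ``blocking set of $H$'' is being used definitionally or functionally; the paper treats it as something to be established, and its deletion argument is the main place where minimality of $G$ (as opposed to minimality of $\alpha$) is genuinely exploited in this theorem.

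Finally, you do not address that the blocking set is a clique; the paper argues this separately, from the impossibility of a second exterior vertex. The remaining pieces of your plan (existence of the contributory set, the sliding argument for existence of a blocking set, and the $n-1$ bound) do align with the paper's proof.
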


\begin{proof}
Let $H$ be as described and without loss of generality assume $H$ is the rightmost side component. Since $H$ is exterior, there must be one exterior vertex, and one connecting vertex. Since $H$ is confined, it must contain some set of vertices that contribute to the impropriety of $G$. Along with the contributing set, there must also be a blocking set within $H$.  

To see this, let $a$ be the rightmost endpoint of the rightmost vertex in the contributory set of $H$. Let $b$ be the rightmost endpoint of the basepoint of $G$. If there were no blocking set of $H$, then there exists another representation of $G$ in which nothing is changed except $a>b$: this would reduce the impropriety of $G$ and so is not possible because $G$ is minimal forbidden. Thus there must exist a blocking set within $H$. 

Furthermore by the same reasoning we can say that there must be at least one vertex in the blocking set of $H$ that is not adjacent to the contributory set of vertices. Call this vertex $q$. 

Furthermore, every vertex in the blocking set of $H$ must be adjacent to the basepoint of $G$. To prove this, assume that there exists a vertex in the blocking set of $H$ that is a distance of two from the basepoint of $G$. Call this vertex $w$. Now look at the subgraph of $G$ created by removing the following vertices: the original exterior vertex, the connecting vertex, and every vertex in the blocking set of $H$ other than $q$ and $w$. This subgraph is a forbidden interval subgraph, with the remnants of the blocking set of $H$ forming an unconfied exterior side component. As $G$ is minimal forbidden this is impossible. Hence every vertex in the blocking set is adjacent to the basepoint.

It remains to be shown that this blocking set of $H$ must also be a clique of maximum order among all potential blocking sets of $H$.

First we show that the blocking set of $H$ must be a clique. To see this, assume that there is a vertex in the blocking set of $H$ that is not adjacent to some other vertex in $H$. We see that one of these vertices is either contributing to the impropriety of $G$, or that one of these vertices is not adjacent to the basepoint of $G$. By definition, a vertex cannot be in both the contributory and blocking sets. In the other case, the blocking set of $H$ contains a vertex of distance two from the basepoint of $G$, which would create two exterior vertices. We've already shown that this cannot occur, and so the blocking set of $H$ must indeed be a clique.

To show the blocking set must have maximum order among all potential blocking sets of $H$, assume there is a potential blocking set of $H$ larger than the actual blocking set of $H$ in $\alpha$. There exists a representation of $G$ such that the larger potential blocking set and the original blocking set are switched. This reduces the impropriety of $G$, meaning that $G$ was not minimal forbidden, a contradiction. Therefore, the original blocking set of $H$ must have maximum order among the potential blocking components of $H$. 

Therefore, all confined, exterior side components in minimal forbidden interval subgraphs for $p$-improper interval graphs must be of the form described above. 
\end{proof}

The final theorem counts the number of minimal forbidden interval subgraphs for the class of $p$-improper interval graphs when both side components are unconfined. 

\begin{theorem}
There are $3\left( 2^{{p\choose 2}}\right)-2$ minimal forbidden interval subgraphs for the class of $p-1$-improper interval graphs that have two unconfined side components. 
\end{theorem}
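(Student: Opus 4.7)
The plan is to invoke the structural theorems above to pin down the possible shapes of a minimal forbidden subgraph whose two side components are unconfined, and then count by cases on the types of those side components.

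Since imp$(G)=p$ in any minimal forbidden example for the class of $(p-1)$-improper graphs, the basepoint $z$ properly contains exactly $p$ vertices; Lemma~\ref{NoExtra} then forces the vertex set of $G$ to consist of nothing beyond $z$, the two side components, and these $p$ contributors. Theorem~\ref{UnconfinedExterior} fixes an unconfined exterior side component as a specific two-vertex gadget (an exterior vertex together with a connector), and Theorem~\ref{UnconfinedNonExterior} fixes an unconfined non-exterior side component as a clique whose order is determined by $G$ itself (the maximal order among interior potential blocking sets). Consequently, once the type---exterior or non-exterior---of each of the two side components is specified, that portion of $G$ is determined uniquely.

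Next I would split into the three unordered type-configurations: (i) both exterior, (ii) one of each, and (iii) both non-exterior. Within each configuration the only remaining freedom is the adjacency pattern among the $p$ contributors sitting inside $z$; since each of the $\binom{p}{2}$ pairs of contributors can be independently chosen adjacent or non-adjacent (realized by sliding the corresponding intervals inside the basepoint interval), each configuration produces $2^{\binom{p}{2}}$ distinct minimal forbidden subgraphs, giving the preliminary count $3\cdot 2^{\binom{p}{2}}$.

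Finally I would correct for double counting. A short case analysis shows that exactly two graphs are counted twice: the left--right symmetry inherent in configurations (i) and (iii) causes two extremal choices of the contributor graph to produce the same unlabeled minimal forbidden subgraph under two different type-descriptions. Subtracting these two duplications gives the claimed total $3\cdot 2^{\binom{p}{2}}-2$.

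The hard step will be the final correction. The structural content falls out cleanly from the preceding theorems together with Lemma~\ref{NoExtra}, and the $2^{\binom{p}{2}}$ count within a fixed configuration is a direct consequence of the independence of the contributor adjacencies; the delicate part is the isomorphism check that isolates precisely two unavoidable coincidences and confirms that no further collisions arise between distinct type-configurations or distinct contributor subgraphs.
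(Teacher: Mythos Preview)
Your structural setup matches the paper exactly, and the preliminary count $3\cdot 2^{\binom{p}{2}}$ is obtained in the same way. The gap is in your final step: the $-2$ does \emph{not} come from isomorphism collisions or left--right symmetry. In the paper, all $3\cdot 2^{\binom{p}{2}}$ configurations are pairwise non-isomorphic as labelled configurations; the subtraction arises because two of them are forbidden but fail to be \emph{minimal}.

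Specifically, take the contributor subgraph $H$ to be edgeless (the $p$ contributors pairwise non-adjacent). If either side component is exterior, delete its exterior vertex: the surviving connecting vertex then serves as a single-vertex blocking set, and the resulting proper subgraph is still forbidden for the class of $(p-1)$-improper interval graphs. Hence the two configurations \textit{(both exterior, $H$ edgeless)} and \textit{(one exterior and one non-exterior, $H$ edgeless)} are not minimal and must be discarded, which yields the $-2$. Your double-counting explanation would not isolate exactly two cases in general (symmetry would affect many contributor subgraphs, not just two ``extremal'' ones), so that line of argument cannot close the proof.
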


\begin{proof}
Consider the class of minimal forbidden subgraphs for the class of $p-1$-improper interval graphs that have two unconfined side components. Let $G$ be some arbitrary member of this class. Because the side components are unconfined, these graphs must be exactly $p$-improper, so there are exactly $p$ vertices that contribute to the impropriety. In a minimal representation there can be no other vertices that contribute to the impropriety because $G$ is minimal forbidden. Now consider the subgraph $H$ formed by these contributory vertices. There are no structural requirements on $H$, so there are $2^{p \choose 2}$ possible graphs that $H$ could be. 

$G$ can either have two non-exterior side components, two exterior side components, or one exterior side component and one non-exterior side component. From Theorem \ref{UnconfinedNonExterior} and Theorem \ref{UnconfinedExterior}, we see that each of these three possible configuration of side components are distinct.

All of these configurations are forbidden, and all but two of them are minimal. The two that are not minimal are the two cases where $H$ has no edges and one or both of the side components are exterior. This is because we can remove the exterior vertex in the side component and the remaining vertex is a blocking set. Thus, both of $G$'s side components in this case must be non-exterior, which removes two cases. This gives us our final count of $3\left(2^{{\binom{p}{2}}}\right)-2$ minimal forbidden interval subgraphs for the class of $p$-improper interval graphs that have two unconfined side components. 
\end{proof}

\bibliographystyle{plain}
\bibliography{References}

\end{document}